\newtheorem{theorem}{Theorem}[section]
\newtheorem{proposition}[theorem]{Proposition}
\newtheorem{lemma}[theorem]{Lemma} 
\newtheorem{corollary}[theorem]{Corollary}
\newtheorem{conj}[theorem]{Conjecture}
\theoremstyle{definition}
\newtheorem{definition}[theorem]{Definition}
\newcommand{\N}{{\mathbb N}}
\newcommand{\R}{{\mathbb R}}
\newcommand{\Z}{{\mathbb Z}}
\newcommand{\lk}{\operatorname{Lk}}
\title{Finitely generated groups acting uniformly properly on hyperbolic space}
\author{Robert Kropholler and Vladimir Vankov}
\begin{document}
\maketitle

\begin{abstract}
We construct an uncountable sequence of groups acting uniformly properly on hyperbolic spaces. We show that only countably many of these groups can be virtually torsion-free. This gives new examples of groups acting uniformly properly on hyperbolic spaces that are not virtually torsion-free and cannot be subgroups of hyperbolic groups.
\end{abstract}

\section{Introduction}

We say that a group $G$ acts {\em properly} on a metric space $X$ if for all $r>0$ and $x\in X$ there exists $N$ such that $|\{g\in G\mid d(x, gx)\leq r\}|\leq N.$
A group is said to be {\em hyperbolic} if it acts properly and cocompactly by isometries on a hyperbolic metric space. 
It is currently not known whether hyperbolic groups which are not virtually torsion-free exist.
There have been many interesting generalisations of this notion. 
For instance: acylindrically hyperbolic groups \cite{osin} or relatively hyperbolic groups \cite{gromov}. 
In this paper we will study the class of groups with uniformly proper actions studied in \cite{coulonosin}. 

\begin{definition}
	Let $G$ be a group acting on a metric space $X$. 
	We say that the action is {\em uniformly proper} if for every $r>0$ there exists $N$ such that for all $x\in X$, 
	$$|\{g\in G\mid d(x, gx)\leq r\}|\leq N.$$
\end{definition}
One should note that $N$ in this definition only depends on $r$ and not on $x$.
We insist on this, as otherwise all groups admit a proper action a hyperbolic space, namely their combinatorial horoball \cite{groves}.

The class  of groups acting uniformly properly on hyperbolic spaces includes all subgroups of hyperbolic groups. 
In \cite{coulonosin}, the question of whether these two classes coincide is asked. 
This is shown to be false in \cite{kropholler}, where uncountably many groups of type $FP_2$ acting uniformly properly on hyperbolic spaces are obtained, while there are only countably many finitely generated subgroups of hyperbolic groups. 

In this paper, we provide a streamlined version of this construction and go on to show that many of the examples created are not virtually torsion-free. 
Specifically, we prove the following:
\begin{theorem}\label{mainthm}
	There exist uncountably many finitely generated groups $H_W$ acting uniformly properly on hyperbolic spaces. 
	Moreover, at most countably many of the groups obtained are virtually torsion-free. 
\end{theorem}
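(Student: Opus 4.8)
The plan is to produce an uncountable family of finitely generated groups $H_W$, indexed by some uncountable parameter set (the subscript $W$ strongly suggests an indexing by infinite words, sequences, or subsets of $\N$), each acting uniformly properly on a hyperbolic space, and then to bound the virtually-torsion-free ones by a cardinality argument. First I would recall or reconstruct the construction from \cite{kropholler} in the promised streamlined form: one typically takes a fixed ambient object (a hyperbolic group or a graph of groups / a suitable small-cancellation or cube-complex gadget) and glues in torsion data controlled by the parameter $W$. The key structural input is a general criterion guaranteeing that the assembled group $H_W$ acts uniformly properly on a hyperbolic space — most likely this is a combination theorem (Bestvina--Feighn style, or an HNN/amalgam combination) together with a uniform bound on the sizes of the relevant vertex/edge stabilisers, so that the constant $N = N(r)$ in the definition of uniform properness can be chosen independently of $W$. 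I would isolate this as the engine of the first half of the theorem.

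For the uncountability of the family, I would verify that distinct parameters $W \neq W'$ yield non-isomorphic groups $H_W \not\cong H_{W'}$, so that the assignment $W \mapsto H_W$ is genuinely injective up to isomorphism. This is where one must extract an isomorphism invariant from $W$: a natural candidate is the set of orders of finite subgroups (the \emph{finite-subgroup spectrum}) or the isomorphism types of torsion elements appearing in $H_W$, since the construction encodes $W$ precisely through prescribed torsion. Concretely, I would show that $W$ can be recovered from $H_W$ by reading off which finite groups embed, so that uncountably many choices of $W$ give uncountably many isomorphism classes. Each $H_W$ being finitely generated is arranged by the construction (a bounded generating set, uniform in $W$).

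The second, more delicate half is the bound on virtually torsion-free examples. A group $G$ is virtually torsion-free if it has a finite-index subgroup with no nontrivial torsion. The strategy is to show that virtual torsion-freeness of $H_W$ forces $W$ to lie in a countable subset. The natural mechanism is via finite quotients or the profinite/residual structure: if $H_W$ is virtually torsion-free, one can often certify this by a finite amount of data (e.g.\ a finite-index torsion-free subgroup is witnessed by a homomorphism to a finite group with suitable properties, or by the vanishing of certain torsion obstructions). Since $H_W$ is finitely generated, for each fixed finite group $Q$ there are only countably many homomorphisms $H_W \to Q$ across all $W$ up to the relevant equivalence, and there are only countably many finite groups $Q$; a virtually-torsion-free $H_W$ must be pinned down by one such finite piece of data, giving a countable-to-one (indeed countable total) classification.

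I expect the main obstacle to be the second half: turning "virtually torsion-free" into a finitely-describable, countable certificate. The difficulty is that virtual torsion-freeness is a priori an infinitary condition (one quantifies over all finite-index subgroups), so I must find the right finiteness phenomenon — likely that a torsion-free finite-index subgroup must trivialise \emph{all} the infinitely many torsion elements encoded by $W$ through a \emph{single} finite quotient, and that such a quotient can only handle the torsion pattern $W$ if $W$ is, say, eventually periodic or otherwise countably constrained. Making this precise, and in particular ruling out that infinitely many independent pieces of torsion could be simultaneously killed by one finite-index subgroup for uncountably many $W$, is the crux; I would attack it by analysing the interaction between the combination structure (which controls where torsion lives) and the permutation representation on cosets of a putative torsion-free finite-index subgroup.
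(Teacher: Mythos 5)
There is a genuine gap, and it sits under both halves of your plan: you never address why the prescribed torsion actually exists in $H_W$. In the paper's construction $H_W$ is presented by $\{r_v\mid v\notin W\}\cup\{r_v^p\mid v\in W\}$, and imposing $r_v^p=1$ could a priori collapse $r_v$ itself, leaving no torsion at all; ruling this out is the technical heart of the paper (\cref{lem:necrels} and \cref{lem:tors}, proved topologically: in the branched cover, the complex $L_v$ built from $\lk(v,Y_W)$ covers $T_v$ with $\pi_1(T_v)=\Z/p\Z$ generated by $\gamma_v$, so $r_v$ has order exactly $p$ when $v\in W$). Worse, your proposed isomorphism invariant for uncountability --- the finite-subgroup spectrum, i.e.\ ``which finite groups embed'' --- would fail even granting that lemma: all the torsion created is of order the one fixed prime $p$, so every non-empty $W$ yields the same spectrum $\{1,\Z/p\Z\}$, which cannot separate uncountably many parameters. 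The paper in fact never proves $W\mapsto H_W$ is injective. Instead it uses the invariant $\mathcal{R}(G,A,R)$ of \cite{kropholler}: the words $r_v$ are \emph{fixed} words in a free group on a generating set $A$ common to all $H_W$, and $\mathcal{R}(H_W,A,R)=\{r_v\mid v\notin W\}$ by \cref{lem:tors}; since any single group realises only countably many values of this invariant, uncountably many isomorphism classes occur without recovering $W$ from the abstract group. Your first half also leaves the uniformly proper action as a guessed combination theorem with uniform stabiliser bounds; the paper instead gets it concretely, with trivial stabilisers: $H_W$ acts \emph{freely} on the graph $C_W$ sitting inside the CAT(0) cube complex $Y_W$ obtained as a branched cover, and Proposition 3.3 of \cite{coulonosin} applies (\cref{lem:uniformprop}).

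Your second half is much closer to the mark: the permutation representation on cosets of a putative torsion-free finite-index subgroup is exactly the paper's mechanism. But the ``eventually periodic'' speculation is a detour --- no periodicity or profinite analysis is needed, and indeed periodicity is only conjectural here (\cref{conj}). The actual count is direct: if $F_W\leq H_W$ is torsion-free of index $n$, the coset action gives $\phi\colon H_W\to S_n$, and since $r_v$ has order exactly $p$ for $v\in W$ (that lemma again), $\phi(r_v)$ cannot be trivial (else $r_v\in F_W$), so $o(\phi(r_v))=p$; for $v\notin W$ one has $r_v=1$ in $H_W$, so $o(\phi(r_v))=1$. Hence $W=\{v\in V\mid o(\phi(r_v))=p\}$ is determined by the finitely many images of $A$ in $S_n$, of which there are countably many over all $n$. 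So the crux you correctly identified is resolved not by constraining the shape of $W$, but by observing that the finite certificate determines $W$ outright --- and everything reduces to the order-$p$ lemma your sketch is missing.
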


The construction starts by making a subgroup $H$ of a hyperbolic group $G$ that is finitely generated but not finitely presented. 
The presentation for $H$ has relators contained in a set $V$. 
By considering a subset $W\subset V$, we obtain a group $H_W$ by replacing relators in $W$ with their $p$-th powers. 

The algebraic structure of the groups constructed is similar to that of subgroups of hyperbolic groups. 
Thus it would be of interest to obtain a characterisation of when these groups embed in hyperbolic groups. 

For example, if the set of relators for which $p$-th powers are taken is chosen in a periodic way, then the group $H_W$ is a subgroup of a hyperbolic group. 
In this instance, the hyperbolic group acts properly cocompactly on a {CAT(0)} cube complex and so is virtually special \cite{agol}.  Hence, $H_W$ is virtually torsion-free in this case. 

One should compare this to \cite{vankov}, where a similar construction is applied to generalised Bestvina-Brady groups. 
There it is shown that if $H_W$ is virtually torsion-free, then the set of relators for which $p$-th powers are taken has to be periodic. 
Therefore we may conjecture that this is the case here as well.
\begin{conj}\label{conj}
	The group $H_W$ is virtually torsion-free if and only if $W$ is a periodic subset of $V$. 

	This is the case if and only if $H_W$ embeds in a hyperbolic group. 
\end{conj}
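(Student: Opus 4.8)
The plan is to treat the condition ``$W$ is periodic'' as the pivot and prove all three stated properties equivalent to it. The implications ``$W$ periodic $\Rightarrow$ $H_W$ embeds in a hyperbolic group $\Rightarrow$ $H_W$ virtually torsion-free'' are essentially the content of the preceding discussion: periodicity lets one assemble $H_W$ as a subgroup of a hyperbolic group acting properly and cocompactly on a \textup{CAT(0)} cube complex, which is virtually special by \cite{agol} and hence virtually torsion-free. So the work lies entirely in the converse directions, and the heart of the matter is showing that virtual torsion-freeness of $H_W$ forces $W$ to be a periodic subset of $V$.

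For the implication ``virtually torsion-free $\Rightarrow$ $W$ periodic'' I would adapt the argument of \cite{vankov}. Suppose $H_W$ has a torsion-free subgroup of finite index; passing to its normal core yields a finite quotient $\pi\colon H_W \to Q$ whose kernel is torsion-free. By construction each relator $w\in W$ becomes an element of order $p$ in $H_W$, so $\pi(w)$ must have order exactly $p$, while every relator in $V\setminus W$ is trivial and maps to $1$. The decisive structural input is that $V$ carries a natural shift, a $\Z$-action coming from the way $H$ sits inside the hyperbolic group $G$, which is realised by an automorphism of the ambient combinatorial data. Composing this shift with $\pi$ produces a two-sided $Q$-valued sequence recording, index by index, whether the corresponding relator is killed or acquires order $p$. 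Since $Q$ is finite and the shift is a genuine automorphism (so there is no transient part), a pigeonhole argument applied to finite windows of this bi-infinite sequence forces it to be periodic, and reading off the positions where $\pi(w)$ has order $p$ recovers exactly the periodicity of $W$ inside $V$.

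To bring the embedding condition into the cycle it suffices to establish ``$H_W$ embeds in a hyperbolic group $\Rightarrow$ $H_W$ virtually torsion-free'', after which the previous paragraph closes the loop. The torsion of $H_W$ is concentrated in the cyclic subgroups of order $p$ generated by the $w\in W$, and an embedding into a hyperbolic group gives a uniform bound on the orders and on the number of conjugacy classes of finite subgroups. I would use this bound, together with the action on the hyperbolic space, to manufacture a finite quotient of $H_W$ detecting the full torsion pattern, and then feed that quotient into the same shift-and-pigeonhole machine to conclude periodicity directly, without first passing through virtual torsion-freeness in the abstract.

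The main obstacle is precisely this last step. Promoting ``detects each individual torsion element'' to ``kernel is globally torsion-free'' is delicate, and the honest difficulty is that it is open whether an arbitrary hyperbolic group is virtually torsion-free, so one cannot simply quote that subgroups of hyperbolic groups have finite-index torsion-free subgroups. A complete proof therefore demands either a direct combinatorial obstruction showing that an aperiodic $H_W$ cannot embed in \emph{any} hyperbolic group, or a geometric argument that these particular subgroups inherit virtual torsion-freeness from their overgroup via the cube-complex structure. Ruling out a genuinely aperiodic $W$ that nonetheless admits a torsion-free finite-index subgroup, by combining the self-similarity of the relator set $V$ with the rigidity of finite quotients, is the crux on which the conjecture turns.
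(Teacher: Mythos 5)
You are attempting to prove \cref{conj}, which the paper explicitly states as a \emph{conjecture} and does not prove; neither direction of the biconditional is established anywhere in the paper, and your proposal does not close the gap either. What the paper actually proves is strictly weaker: in the introduction it is observed (without detailed proof) that a periodic $W$ makes $H_W$ a subgroup of a hyperbolic group acting properly cocompactly on a CAT(0) cube complex, hence virtually torsion-free via \cite{agol} --- this matches your easy direction --- and in Section 5 it is shown only that \emph{at most countably many} $W$ can give a virtually torsion-free $H_W$. That countability argument works by noting that a torsion-free finite-index subgroup yields a homomorphism $\phi\colon H_W\to S_n$ with $o(\phi(r_v))=p$ exactly for $v\in W$ (using \cref{lem:tors}), and $\phi$ is determined by finitely many permutations; it recovers $W$ from $\phi$ but makes no attempt to show $W$ is periodic. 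So the paper deliberately stops short of the equivalence you claim to establish.

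The concrete failures in your plan are these. First, your ``shift-and-pigeonhole'' step presupposes a shift acting on $H_W$ compatibly with the finite quotient $\pi$, but no such shift exists for general $W$: the deck transformation generating the $\Z$-cover $Z\to X_\Gamma$ sends $Z_W$ to $Z_{W'}$ where $W'$ is the translate of $W$, so it induces an automorphism of $H_W$ only when $W$ is already shift-invariant --- assuming it is circular. Worse, in this paper $V$ is by construction a half-ray bijective with $\N$ (the authors restrict to positive branching points precisely ``to eliminate any possibility of $\Z$-periodicity''), so your assertion that ``the shift is a genuine automorphism (so there is no transient part)'' contradicts the setup. Second, your closing implication ``$H_W$ embeds in a hyperbolic group $\Rightarrow$ $H_W$ virtually torsion-free'' is, as you yourself concede, equivalent in difficulty to the open problem of whether hyperbolic groups are virtually torsion-free, stated as open in the paper's first paragraph; a cycle of implications with one open edge proves nothing. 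Third, the appeal to \cite{vankov} does not supply the missing direction: there, ``virtually torsion-free $\Rightarrow$ periodic'' is proved for generalised Bestvina--Brady groups using structure specific to that setting, the paper notes the \emph{converse} is open even there, and no adaptation to the present branched-cover construction is carried out by you or by the authors. In short, your proposal is a research programme whose crux (ruling out aperiodic $W$ with a torsion-free finite-index subgroup, which you correctly identify) remains exactly as open as the conjecture itself.
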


A similar conjecture regarding generalised Bestvina-Brady groups is to appear in a forthcoming paper of Leary and the second author. However, it is interesting to note that it is the converse which is open in that setting.

In Section 2, we construct a non-positively curved cube complex $X$ whose fundamental group $G$ is hyperbolic. 
In Section 3, we give a Morse function on $X$ and find a subgroup of $G$ that is finitely generated but not finitely presented. 
In Section 4, we take branched covers of the cube complex $X$ to obtain uncountably many isomorphism classes of groups acting uniformly properly on hyperbolic spaces. 
Finally, in Section 5, we provide a criterion for when these groups are virtually torsion-free and show that this criterion can only be satisfied by countably many of the groups constructed in Section 4. 

We thank Ian Leary for excellent hospitality at the University of Southampton, where the ideas for this paper began. We also thank the organisers of YGGT VIII, where the authors first met.

\section{A construction of a hyperbolic group}

We begin by constructing one hyperbolic group with a subgroup that is finitely generated but not finitely presented. 
We will then take branched covers of a cube complex to obtain our sequence of groups with the desired properties. 

The following construction can be found in \cite{lodha}.
We use a sizeable graph constructed in the appendix of \cite{gardam} for a figure of this graph see Figure 6.1 on page 102 of \cite{gardamthesis}. 

Let $\Gamma$ be the graph with vertex set $A\sqcup B$, where $A = A^-\sqcup A^+$ and $B = B^-\sqcup B^+$, with $A^-, A^+, B^-, B^+ = \Z/9\Z$. 
There is an edge $a$ to $b$ if any of the following hold:
\begin{enumerate}
	\item If $a\in A^+, b\in B^+$, then $a = b$ or $a = b+1$.
	\item If $a\in A^+, b\in B^-$, then $a = b$ or $a = b-2$. 
	\item If $a\in A^-, b\in B^+$, then $a = b$ or $a = b+2$.
	\item If $a\in A^-, b\in B^-$, then $a = b+1$ or $a = b+2$. 
\end{enumerate}

\begin{proposition}
	$\Gamma$ has no embedded loops of length $<5$. 

	The full subgraph of $\Gamma$ spanned by $A^s\sqcup B^t$ is a loop of length 18, for all choices of $s, t$. 
\end{proposition}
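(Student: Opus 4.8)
The plan is to treat the two assertions separately, using the fact that $\Gamma$ is bipartite with parts $A$ and $B$: every edge runs between an $A$-vertex and a $B$-vertex, so $\Gamma$ has no odd cycles and, being simple, no cycle of length $1,2,3$. Thus ``no embedded loop of length $<5$'' is equivalent to ``no $4$-cycle''. A $4$-cycle alternates between two $A$-vertices and two $B$-vertices, so it suffices to show that any two distinct vertices of $A$ have at most one common neighbour (the argument for two vertices of $B$ being identical). I would record each vertex as a pair $(s,x)$ with $s\in\{+,-\}$ and $x\in\Z/9\Z$, and read the four adjacency rules as saying that $(s,x)\in A$ and $(t,y)\in B$ are joined precisely when $x-y$ lies in a two-element offset set $D_{s,t}\subseteq\Z/9\Z$ determined by the rules. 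I would also record the diagonal action of $\Z/9\Z$ by $x\mapsto x+1$ on all four copies at once, which is an automorphism of $\Gamma$; this lets me normalise one of the two $A$-vertices to have coordinate $0$.

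With this notation, a common neighbour of $(s,x)$ and $(s',x')$ lying in $B^{t}$ is the same thing as a solution of $d-d'=x-x'$ with $d\in D_{s,t}$ and $d'\in D_{s',t}$. Hence the total number of common neighbours is the number of such representations of the fixed difference $x-x'$, summed over $t\in\{+,-\}$, and the first assertion reduces to a finite check. For each of the three sign-patterns $\{+,+\}$, $\{-,-\}$, $\{+,-\}$ I would form the eight-element multiset of differences coming from $D_{s,+}-D_{s',+}$ together with $D_{s,-}-D_{s',-}$, and verify that, apart from the difference $0$ in the two same-sign patterns (which corresponds to taking the two $A$-vertices equal, and is excluded), every value occurs with multiplicity at most $1$. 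The same-sign cases are quick because the offsets there are small; the delicate case — and the one I expect to be the real obstacle — is the mixed pattern $\{+,-\}$, where the ``$a=b$'' branches of rules 2 and 3 and the shifted branches of rule 4 must be checked not to conspire into a second coincidence. This is exactly the point at which the specific offsets $+1,-2,+2$ and the arithmetic modulo $9$ are used, and where a short loop would appear if the offsets were chosen carelessly.

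For the second assertion I would argue that, in the full subgraph $\Gamma_{s,t}$ spanned by $A^{s}\sqcup B^{t}$, every vertex has degree exactly $2$: the vertex $(s,x)$ is joined to the two vertices $(t,x-d)$ with $d\in D_{s,t}$, and dually each $(t,y)$ has the two neighbours $(s,y+d)$. Since $\Gamma_{s,t}$ is a finite $2$-regular graph it is a disjoint union of cycles, so it remains only to see that it is connected. Writing $D_{s,t}=\{d_1,d_2\}$, walking two edges at a time advances the $A$-coordinate by $d_2-d_1$; one checks that in each of the four cases $d_2-d_1\in\{\pm1,\pm2\}$, which is a unit modulo $9$. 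Therefore the walk visits all nine vertices of $A^{s}$ (and, alternately, all nine of $B^{t}$) before closing up, so $\Gamma_{s,t}$ is a single cycle through all $18$ vertices. The only thing to be careful about here is the coprimality of $d_2-d_1$ with $9$: were this shift to share a factor with $9$, the subgraph would break into several shorter cycles, so this is where the choice of $\Z/9\Z$ rather than an arbitrary cyclic group genuinely enters.
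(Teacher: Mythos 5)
Your framework is the right one, and it is genuinely different from the paper's treatment: the paper gives no computation at all, saying only that the conditions are modularity conditions mod $9$ and deferring all details to the appendix of Gardam's paper. Your reduction is correct --- bipartiteness between $A$ and $B$ kills all odd cycles, simplicity kills lengths $1$ and $2$, so the first assertion is equivalent to the absence of $4$-cycles, i.e.\ to any two distinct $A$-vertices having at most one common neighbour; encoding the rules as difference sets $D_{s,t}$ (an edge between $(s,x)\in A^s$ and $(t,y)\in B^t$ iff $x-y\in D_{s,t}$) turns this into a finite check on the multisets $D_{s,t}-D_{s',t}$. Your proof of the second assertion is complete and correct: reading off $D_{+,+}=\{0,1\}$, $D_{+,-}=\{0,-2\}$, $D_{-,+}=\{0,2\}$, $D_{-,-}=\{1,2\}$, each $\Gamma(A^s\sqcup B^t)$ is $2$-regular, and the two-step translation $d_2-d_1\in\{1,\pm 2\}$ is prime to $9$ in all four cases, so each such subgraph is a single $18$-cycle.

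The gap is in the one step you defer: the finite check in the mixed pattern $\{+,-\}$ does not verify --- it fails for the adjacency rules as printed. Indeed $D_{+,+}-D_{-,+}=\{0,1\}-\{0,2\}=\{0,1,-1,-2\}$ and $D_{+,-}-D_{-,-}=\{0,-2\}-\{1,2\}=\{-1,-2,-3,-4\}$, so the differences $-1$ and $-2$ each occur twice, once for $t=+$ and once for $t=-$, and each repeat produces an embedded $4$-cycle. Concretely, $0\in A^+$ and $2\in A^-$ have the two common neighbours $0\in B^+$ (rules 1 and 3: $a=b$ and $a=b+2$) and $0\in B^-$ (rules 2 and 4: $a=b$ and $a=b+2$), so $0_{A^+},\,0_{B^+},\,2_{A^-},\,0_{B^-}$ is an embedded loop of length $4$. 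You correctly flagged the mixed pattern as the delicate case, but the verification you promise there cannot be completed with the stated data: either the first assertion is false for $\Gamma$ as literally defined, or --- more plausibly, since girth at least $5$ is precisely what Gardam's appendix establishes for his sizeable graph --- the paper's transcription of the offsets contains a typo. To complete your argument you must either run your multiset check against the offsets as actually given by Gardam, or exhibit corrected offsets for which all three sign-pattern checks pass; as written, the assertion that every nonzero value occurs with multiplicity at most $1$ is false for the paper's rules.
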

\begin{proof}	
	These conditions correspond to modularity conditions $\mod 9$. 
	For full details see the appendix in \cite{gardam}
\end{proof}

Let $\Lambda_A$ be the graph with two vertices $a^+, a^-$ and $|A|$ edges each running from $a^-$ to $a^+$. 
Define $\Lambda_B$ similarly. 
The squares in $\Lambda_A\times \Lambda_B$ are in one-to-one correspondence with $A\times B$.

Let $X_{\Gamma}\subset \Lambda_A\times\Lambda_B$ be the cubical subcomplex containing $(\Lambda_A\times\Lambda_B)^{(1)}$ and those squares $(a, b)$ such that $(a, b)$ is an edge of $\Gamma$.

\begin{proposition}
	The link of any vertex of $X_{\Gamma}$ is $\Gamma$. 
\end{proposition}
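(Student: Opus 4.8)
The plan is to compute the link directly from its definition in a square complex, recalling that for a two-dimensional cube complex $\lk(v)$ is the graph whose vertices are the edge-germs (corners of edges) at $v$ and whose edges are the corners of squares at $v$, where a square-corner joins the two edge-germs running along its two sides. Since $X_{\Gamma}$ is two-dimensional, this graph \emph{is} the link, so it suffices to read off its vertices and edges.

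First I would identify the vertices of the link. The complex $\Lambda_A\times\Lambda_B$ has exactly four vertices $(a^s,b^t)$ with $s,t\in\{+,-\}$; fix one of them. Because $\Lambda_A$ consists of the two vertices $a^-,a^+$ joined by the $|A|$ edges of $A$, every edge of $\Lambda_A$ is incident to both $a^-$ and $a^+$, and likewise for $\Lambda_B$. Hence the horizontal edges at $(a^s,b^t)$ are exactly the $(e_a,b^t)$ for $e_a$ an edge of $\Lambda_A$, giving one germ for each $a\in A$, and the vertical edges are the $(a^s,e_b)$, giving one germ for each $b\in B$. Since $X_{\Gamma}$ contains the full $1$-skeleton $(\Lambda_A\times\Lambda_B)^{(1)}$, all of these germs are present regardless of which squares are included. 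This yields a canonical bijection between the vertices of $\lk((a^s,b^t))$ and $A\sqcup B=\Vertices(\Gamma)$, valid at each of the four vertices.

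Next I would identify the edges of the link. A square of $\Lambda_A\times\Lambda_B$ is a product $e_a\times e_b$ of an edge of $\Lambda_A$ with an edge of $\Lambda_B$, and under the correspondence above these squares are indexed by pairs $(a,b)\in A\times B$. By construction $X_{\Gamma}$ contains the square $(a,b)$ precisely when $(a,b)$ is an edge of $\Gamma$. Each such square has all four product-vertices as corners, so it contributes exactly one corner at $(a^s,b^t)$, and that corner joins the germs along its two sides, namely the horizontal germ $a$ and the vertical germ $b$. Thus $\lk((a^s,b^t))$ has an edge between $a\in A$ and $b\in B$ exactly when $(a,b)\in\Edges(\Gamma)$, matching the edges of $\Gamma$ under the vertex bijection of the previous step.

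Combining the two identifications gives $\lk((a^s,b^t))\cong\Gamma$. The argument is insensitive to the choice of signs $s,t$ precisely because each of $\Lambda_A$ and $\Lambda_B$ has all of its edges running between its two vertices, so the four links coincide. I do not anticipate a serious obstacle: the only point requiring care is the bookkeeping of edge-germ and square-corner incidences, and in particular checking that the side-pairing of each square-corner recovers the correct pair $(a,b)$, which is exactly where the specific product structure of $\Lambda_A\times\Lambda_B$ enters.
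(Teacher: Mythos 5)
Your proof is correct and follows essentially the same route as the paper: both identify the link's vertices with $A\sqcup B$ via the full $1$-skeleton and its edges with the squares $(a,b)\in\Edges(\Gamma)$. The only cosmetic difference is that the paper reduces to the vertex $(a^+,b^+)$ by a symmetry of the construction, whereas you observe directly that the computation is identical at all four vertices since every edge of $\Lambda_A$ (resp.\ $\Lambda_B$) joins both of its vertices.
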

\begin{proof}
	There are 4 vertices in $X_{\Gamma}$ but the definition permits a symmetry taking any vertex to any other. 
	Thus we will focus on the case of the vertex $v = (a^+, b^+)$. 
	Let $L = \lk(v, X_{\Gamma})$. 
	
	Since $(\Lambda_A\times \Lambda_B)^{(1)}\subset X_{\Gamma}$, we see that $V(L) = A\sqcup B$. 
	There is an edge in $L$ from the vertex $a$ to the vertex $b$ exactly when there is a square at $v$ with edges $a, b$. 
	We can see that this is exactly the case when $(a, b)$ is an edge of $\Gamma$. 
\end{proof}

Since $\Gamma$ is triangle-free, we deduce that $X_{\Gamma}$ is a non-positively curved cube complex. We can now apply a theorem of Moussong \cite{moussong}.

\begin{theorem}
	Let $X$ be a 2-dimensional non-positively curved cube complex. 
	Suppose that the link of each vertex does not contain an embedded loop of length 4. 
	Then $\pi_1(X)$ is hyperbolic. 
\end{theorem}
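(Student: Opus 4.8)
The plan is to pass to the universal cover and invoke the Flat Plane Theorem, reducing hyperbolicity of $\pi_1(X)$ to the non-existence of an isometrically embedded Euclidean plane, which the link hypothesis will then forbid. Since $X$ is a compact non-positively curved cube complex, its universal cover $\tilde X$ is a complete CAT(0) space and $\pi_1(X)$ acts on $\tilde X$ properly, cocompactly and by isometries, i.e. geometrically. By the Flat Plane Theorem, such a group is hyperbolic if and only if $\tilde X$ contains no isometrically embedded copy of $\mathbb{E}^2$. Note that for a $2$-dimensional cube complex the NPC condition already means every vertex link is a simplicial graph of combinatorial girth at least $4$: the edges of the link have length $\pi/2$, so triangle-freeness is exactly the CAT(1) condition. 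The extra hypothesis that no link contains an embedded loop of length $4$ upgrades this to girth at least $5$.

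So suppose, for contradiction, that $P\subset\tilde X$ is an isometrically embedded flat plane. As a complete, convex, flat subspace of the piecewise-Euclidean complex $\tilde X$, $P$ inherits a decomposition into convex Euclidean polygons, each mapped isometrically onto its image in a single closed square of $\tilde X$, with adjacent polygons meeting along segments lying over edges of $\tilde X$. Each polygon has diameter at most $\sqrt 2$, so this is a tiling of $\mathbb{E}^2$ by uniformly bounded convex pieces; such a tiling cannot avoid having $0$-cells, that is, points where at least three tiles, and hence at least three gluing segments, meet. At such a point the three incident segments map into edges of $\tilde X$ that are not all collinear, which forces the image point $v$ to be a vertex of $\tilde X$. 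Thus $P$ passes through a vertex.

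The contradiction now comes from the link of $v$. Because $P$ is flat and totally geodesic, its space of directions at $v$ is a circle of circumference exactly $2\pi$, isometrically embedded in $\lk(v,\tilde X)$ as a closed local geodesic. In the metric graph $\lk(v,\tilde X)$ every edge has length $\pi/2$, so a closed geodesic of length $2\pi$ traverses exactly four edges; the CAT(1) condition (girth $\ge 2\pi$) guarantees it is embedded, and hence it is an embedded loop of length $4$ in $\lk(v,\tilde X)=\lk(v,X)$. This contradicts the hypothesis. Therefore $\tilde X$ admits no flat plane and $\pi_1(X)$ is hyperbolic.

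The main obstacle is the middle step: one must argue carefully that the flat plane genuinely runs through a vertex of $\tilde X$, rather than threading only through edge-interiors and square-interiors, and that the resulting circle in the link is an \emph{embedded} $4$-cycle rather than a merely immersed closed geodesic. Both points rest on the fact that the only locus where distinct edges of a $2$-dimensional cube complex cross transversally is a vertex, together with the girth-$\ge 2\pi$ CAT(1) estimate. An alternative that sidesteps the Flat Plane Theorem is to deform the metric directly: replace each unit square by a regular hyperbolic quadrilateral with all corner angles equal to $2\pi/5$. Then each link edge acquires length $2\pi/5$, the shortest link cycle has length $5\cdot(2\pi/5)=2\pi$, so every link is CAT(1); the complex becomes locally CAT($-1$), its universal cover is CAT($-1$) by Cartan--Hadamard, hence Gromov hyperbolic, and $\pi_1(X)$ is hyperbolic by the \v{S}varc--Milnor lemma.
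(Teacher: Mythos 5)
Your proof is correct, but it takes a genuinely different route from the paper for the simple reason that the paper does not prove this statement at all: it quotes it as a theorem of Moussong, cites \cite{moussong}, and applies it as a black box to $X_\Gamma$. You instead supply two self-contained arguments. Your flat-plane route (geometric action on the CAT(0) universal cover, the Flat Plane Theorem, then forcing a hypothetical flat plane through a vertex and reading off a length-$2\pi$ closed geodesic in the link) handles the two delicate points correctly: near an edge-interior point of a $2$-dimensional cube complex the plane meets exactly two squares, so a point where three or more tiles of the induced tiling meet must map to a vertex; and a non-backtracking closed $4$-edge loop that fails to be embedded would split off a bigon, which is excluded because links are simplicial. (Even more directly: under the hypothesis the link girth is $5\cdot\pi/2 > 2\pi$, so no closed local geodesic of length $2\pi$ can exist in any link, embedded or not.) Your second argument --- remetrizing each square as a regular hyperbolic quadrilateral with corner angles $2\pi/5$, so that combinatorial girth $\geq 5$ makes every link CAT(1), the complex locally CAT($-1$), and $\pi_1(X)$ hyperbolic via Cartan--Hadamard and \v{S}varc--Milnor --- is cleaner, and is in fact closer in spirit to Moussong's own metric techniques; it is the proof one would likely include if the paper chose to prove the theorem rather than cite it. One small caveat worth flagging: as stated in the paper the theorem omits any compactness hypothesis on $X$, which both of your arguments (rightly) use --- without it $\pi_1(X)$ need not even be finitely generated, hence not hyperbolic --- but this is harmless in context since $X_\Gamma$ is a finite complex.
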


We already know that $\Gamma$ has no cycles of length less than 5, therefore we can conclude:

\begin{corollary}\label{cor:hyp}
	$\pi_1(X_{\Gamma})$ is hyperbolic. 
\end{corollary}

\section{The first example of a finitely generated not finitely presented subgroup}

To find the first example of a subgroup that is finitely generated but not finitely presented, we will use Morse theory. 
For details on Morse theory, see \cite{bbmorse}. 

Throughout, the circle $S^1$ will be triangulated with one vertex and one edge, which we will give an arbitrary orientation. 

Give each edge of $\Lambda_A$ an orientation by 
orienting towards $a^+$ if $a\in A^+$ and towards $a^-$ if $a\in A^-$. 
Define an orientation on $\Lambda_B$ similarly. 

Define $f_A\colon \Lambda_A\to S^1$ by mapping each edge linearly via its orientation. 
Define $f_B$ similarly. 
We can now define a map $f\colon \Lambda_A\times\Lambda_B\to S^1 = \R/\Z$ by $(x, y)\mapsto f_A(x)+f_B(y)$. 
We can restrict to $X_{\Gamma}$ in order to define a function which we also denote $f\colon X_{\Gamma}\to S^1$. 

Lifting this function to $\widetilde{X_{\Gamma}}$ gives us a Morse function $f\colon \widetilde{X_{\Gamma}}\to\R$.

Since the local geometry at each vertex of $\widetilde{X_{\Gamma}}$ is determined by that of $X_{\Gamma}$, we will abuse notation and label the vertices in $\widetilde{X_{\Gamma}}$ by the vertex they map to in $X_{\Gamma}$. 

\begin{proposition}\label{prop:ascdesclinks}
	The descending link of $(a^s, b^t)$ is the full subgraph of $\Gamma$ spanned by $A^s\sqcup B^t$.
	The ascending link of $(a^s, b^t)$ is the full subgraph of $\Gamma$ spanned by $A^{-s}\sqcup B^{-t}$. 
	Hence, both the ascending and descending links are cycles of length $18$. 
\end{proposition}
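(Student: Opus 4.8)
The plan is to compute the descending and ascending links at a single vertex directly from the Morse function, and then to repeat the (identical) computation at a general vertex, the only change being a bookkeeping of signs. Recall that for this type of Morse function the descending link of a vertex $v$ is the subcomplex of $\lk(v)$ spanned by those edges at $v$ along which $\tilde f$ strictly decreases as one moves away from $v$, together with one square of the link for each square at $v$ whose restriction of $\tilde f$ attains its maximum at $v$; equivalently, a square is descending precisely when \emph{both} of its incident edges are descending. The ascending link is defined with ``decreases'' replaced by ``increases'' (and maximum by minimum). Since $\lk(v)=\Gamma$ by the previous proposition, both links are automatically full subgraphs of $\Gamma$, so the task reduces to identifying their vertex sets.

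First I would fix $v=(a^+,b^+)$ and read off the behaviour of $\tilde f$ on each edge from the chosen orientations. A $\Lambda_A$-edge labelled by $a$ is oriented towards $a^+$ exactly when $a\in A^+$, and $f_A$ increases along the orientation; hence if $a\in A^+$ then moving away from $v$ decreases $\tilde f$, so the edge is descending, while if $a\in A^-$ moving away from $v$ increases $\tilde f$, so the edge is ascending. The same dichotomy holds on the $\Lambda_B$ side, with $B^+$ descending and $B^-$ ascending. Thus the descending edges at $v$ are exactly those labelled by $A^+\sqcup B^+$ and the ascending edges those labelled by $A^-\sqcup B^-$.

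Next I would pass from edges to squares. A square at $v$ corresponds to an edge $(a,b)$ of $\Gamma$, and it lies in the descending link precisely when both of its edges $a$ and $b$ descend, i.e. when $a\in A^+$ and $b\in B^+$. Consequently the descending link of $v$ is the full subgraph of $\Gamma$ spanned by $A^+\sqcup B^+$, and symmetrically the ascending link is the full subgraph spanned by $A^-\sqcup B^-$. The proposition of Section 2 identifies the full subgraph on any $A^s\sqcup B^t$ as a loop of length $18$, which gives the stated conclusion at this vertex.

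Finally I would treat a general vertex $(a^s,b^t)$ by repeating the edge analysis verbatim: a $\Lambda_A$-edge labelled $a$ descends at $(a^s,b^t)$ exactly when it is oriented towards the corner $a^s$, that is, when $a\in A^s$, and likewise a $\Lambda_B$-edge descends iff $b\in B^t$. Hence the descending link is the full subgraph on $A^s\sqcup B^t$ and the ascending link that on $A^{-s}\sqcup B^{-t}$, both loops of length $18$. The main obstacle I anticipate is purely bookkeeping: pinning down the orientation convention so that ``moving away from $v$'' is interpreted consistently, and confirming the standard fact that a square descends if and only if \emph{both} of its edges do, rather than merely one. Once these conventions are fixed the computation is mechanical and the length-$18$ claim is immediate from the Section 2 proposition.
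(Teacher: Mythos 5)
Your proof is correct and takes essentially the same approach as the paper's: identify the descending edges at $(a^s,b^t)$ as exactly those oriented towards the vertex, namely those labelled by $A^s\sqcup B^t$, observe that the link is the full subgraph of $\Gamma$ on these vertices, and invoke the Section~2 proposition for the length-$18$ conclusion. The paper's proof is just a terser version of yours, leaving implicit the orientation bookkeeping and the criterion that a square descends iff both of its edges at the vertex do.
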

\begin{proof}
	The descending link of $v$ is the full subgraph of $\Gamma$ spanned by the vertices corresponding to edges oriented towards $v$. 
	In the case of $(a^s, b^t)$ these are exactly the edges coming from $A^s\sqcup B^t$. 
	Thus we obtain the desired result. 

	The ascending link is computed similarly, with the signs reversed. 
\end{proof}

\begin{theorem}
	Let $X = X_{\Gamma}$ and $f$ be the complex and Morse function from above. 
	Let $H$ be the kernel of $f_*$. 
	Then $\pi_1(X)$ is a hyperbolic group and $H$ is finitely generated and not finitely presented. 
\end{theorem}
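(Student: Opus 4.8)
The hyperbolicity of $\pi_1(X)$ is already settled by \cref{cor:hyp}, so the work lies entirely in determining the finiteness properties of $H = \ker(f_*\colon \pi_1(X)\to\Z)$. The plan is to apply Bestvina--Brady Morse theory \cite{bbmorse} to the lifted Morse function $f\colon \widetilde{X}\to\R$. The required geometric inputs are all in place: since $X=X_\Gamma$ is a non-positively curved cube complex it is aspherical, so its universal cover $\widetilde X$ is CAT(0) and hence contractible, and $G=\pi_1(X)$ acts on $\widetilde X$ freely and cocompactly (note $X$ is a finite complex). The lift $f$ is $G$-equivariant with respect to $f_*$, so that $H$ acts on the level and sublevel sets and the deck action of $G/H\cong\Z$ shifts $f$ by integer translations; in particular $H$ acts cocompactly on each slab $f^{-1}([n,n+1])$.

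The main tool is the standard dictionary, due to Bestvina and Brady, between the connectivity of the ascending and descending links and the finiteness properties of the kernel: $H$ is of type $F_n$ if and only if every ascending and descending link of $\widetilde X$ is $(n-1)$-connected. I would invoke this with $n=1$ and $n=2$. For $n=1$, being $0$-connected means nonempty and connected, which is equivalent to $H$ being finitely generated; for $n=2$, being $1$-connected means simply connected, which is equivalent to $H$ being finitely presented.

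By \cref{prop:ascdesclinks}, every ascending and every descending link is a cycle of length $18$. Such a cycle is nonempty and connected, hence $0$-connected, so $H$ is finitely generated. On the other hand a cycle has $\pi_1\cong\Z$ and is therefore not simply connected, so the links fail to be $1$-connected and $H$ cannot be finitely presented. Together these give exactly the two claimed conclusions.

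The step I expect to require the most care is the bookkeeping in the Morse Lemma underlying this dictionary: one must track how the homotopy type of the sublevel set $f^{-1}(-\infty,t]$ changes as $t$ crosses the $f$-value of a vertex, each crossing coning off the corresponding descending link (and, filtering from above, the ascending link). Converting the \emph{uniform} failure of simple connectivity of these $18$-cycles into a genuine obstruction to finite presentation of $H$ -- rather than merely the failure of the sufficient condition -- is the delicate point. It relies on the $G$-cocompactness of $\widetilde X$ together with the fact that the ascending and descending links are the \emph{same} $18$-cycle at every vertex, so that no cancellation can occur in the limiting connectivity of the slabs.
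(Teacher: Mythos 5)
Your treatment of hyperbolicity and of finite generation is correct and matches the paper: \cref{cor:hyp} handles the first claim, and \cref{prop:ascdesclinks} plus the Bestvina--Brady result that connected ascending and descending links force the kernel to be finitely generated (\cite[Theorem 4.1]{bbmorse}) handles the second. The gap is in the other half. Your ``main tool'' -- that $H$ is of type $F_n$ \emph{if and only if} every ascending and descending link is $(n-1)$-connected -- is not a theorem of \cite{bbmorse} at this level of generality. The Morse-theoretic machinery gives only the sufficiency direction: $(n-1)$-connected links imply type $F_n$. The converse fails in general. Indeed, even in the original Bestvina--Brady setting the correct statement for $FP_2$ involves $1$-acyclicity rather than simple connectivity of $L$: taking $L$ acyclic but not simply connected produces kernels of type $FP_\infty$ whose ascending and descending links are not simply connected. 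The mechanism you would need is this: passing a vertex $v$ cones off the descending link circle $\gamma_v$, but this only contributes a genuine new relation if $\gamma_v$ was still essential in the sublevel set just below $v$; if $\gamma_v$ already bounded in the complement of $v$, the coning changes nothing. So non-simply-connected links, even uniformly so at every vertex, do not by themselves obstruct finite presentation, and your closing appeal to cocompactness plus ``the links are the same $18$-cycle at every vertex, so no cancellation can occur'' asserts precisely the point that needs proof.

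What is actually required is a geometric essentialness statement: the circles $\gamma_v$ must remain homotopically non-trivial away from $v$ (compare \cref{lem:necrels}, which the paper proves later by exactly this kind of argument -- a retraction of the CAT(0) cube complex $\widetilde{Z}$ onto $\lk(w)$ followed by a retraction onto the $18$-cycle, using that the cycle is a retract of $\Gamma$), and one must then feed this into a criterion (Brown's criterion on the filtration by sublevel sets, or a packaged result) to conclude failure of $FP_2$. The paper sidesteps all of this by citing \cite[Theorem 4.7]{brady}, which is tailored to precisely this situation of a Morse function on a non-positively curved square complex with circle ascending and descending links, and whose proof contains the essentialness argument you are missing. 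To repair your proof, replace the ``if and only if dictionary'' with that citation, or supply the retraction argument and the Brown-type limit argument explicitly.
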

\begin{proof}
	By \cref{cor:hyp}, we know that $\pi_1(X)$ is a hyperbolic group. 
	By \cref{prop:ascdesclinks}, we have that the ascending and descending links are copies of $S^1$. 
	Thus they are connected graphs and so $H$ is finitely generated by \cite[Theorem 4.1]{bbmorse}. 
	To see that $H$ is not finitely presented we can use \cite[Theorem 4.7]{brady}, which shows that it is not of type $FP_2$. 
\end{proof}

It will be useful in the next sections to consider a presentation for $H$. 
Let $Z$ be the cyclic cover of $X_{\Gamma}$ corresponding to $H$. 
There is a map $h\colon Z\to \R$ which is the lift of $f$ to $Z$. 
Let $Z_0$ denote the preimage of $0$. 
This is a graph and the inclusion $Z_0\to Z$ gives a surjection on fundamental groups. 
From \cite{bbmorse}, we know that as we expand $Z_0$ to $h^{-1}([-t, t])$, the homotopy changes by coning off the ascending and descending links of vertices.
Since each ascending or descending link is a copy of $S^1$, we can see that each vertex $v$ with $h(v)\neq 0$ gives one relation in the presentation of $H$. 

Let $v$ be a vertex of $Z$ such that $h(v)>0$. 
Thus we obtain a relation in $H$ from $v$ which corresponds to coning off the descending link of $v$. 
We can think about the descending link of $v$ as a subset of $Z$ in the $\frac{1}{4}$-neighbourhood of $v$. 
Denote this loop by $\gamma_v$. 
We can homotope $\gamma_v$ to $Z_0$ in order to obtain the relation $r_v$ corresponding to $v$. 

Later it will be useful to know that $\gamma_v$ is not trivial in $Z\smallsetminus\{v\}$. 
\begin{lemma}\label{lem:necrels}
	Let $\gamma_v$ be the loop described above. 
	Then $\gamma_v$ is not homotopically trivial in $Z\smallsetminus\{v\}$. 
\end{lemma}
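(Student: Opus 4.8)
The plan is to reduce the statement to a purely local computation in the universal cover, where contractibility kills the only possible obstruction. First I would record the complementary fact that $\gamma_v$ \emph{is} null-homotopic in $Z$: by construction it bounds the cone on the descending link of $v$ with cone point $v$, a disc lying in the closed star of $v$ but necessarily passing through $v$. The content of the lemma is thus that every filling disc must hit $v$, just as a small loop around a puncture in the plane bounds only through the missing point.

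Next I would pass to the universal cover. Writing $p\colon\widetilde{X_{\Gamma}}\to Z$ for the universal covering (note $\widetilde{Z}=\widetilde{X_{\Gamma}}$, since $Z$ and $X_{\Gamma}$ share a universal cover), the loop $\gamma_v$, being null-homotopic in $Z$, lifts to a loop $\tilde\gamma$ encircling a chosen lift $\tilde v$ of $v$; as $\gamma_v$ sits in the $\tfrac14$-neighbourhood of $v$ and $p$ is a local isometry near vertices, $\tilde\gamma$ sits in the $\tfrac14$-neighbourhood of $\tilde v$ and still realises the descending-link cycle in $\lk(\tilde v)=\Gamma$. I would then argue by contradiction: a null-homotopy of $\gamma_v$ in $Z\smallsetminus\{v\}$ is a map of a disc which, the disc being simply connected, lifts to a disc in $\widetilde{X_{\Gamma}}$ with boundary $\tilde\gamma$; since its image downstairs avoids $v$, the lifted disc avoids $p^{-1}(v)\ni\tilde v$. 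Hence it suffices to show that $\tilde\gamma$ is not null-homotopic, indeed not null-homologous, in $\widetilde{X_{\Gamma}}\smallsetminus\{\tilde v\}$.

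For the core step I would use local homology. Since $X_{\Gamma}$ is a non-positively curved cube complex, $\widetilde{X_{\Gamma}}$ is CAT(0) and hence contractible, so $H_2(\widetilde{X_{\Gamma}})=0$. Excision identifies the local homology $H_2(\widetilde{X_{\Gamma}},\widetilde{X_{\Gamma}}\smallsetminus\{\tilde v\})$ with $H_2(\operatorname{cone}(\Gamma),\Gamma)\cong\tilde H_1(\Gamma)$, the closed star of $\tilde v$ being the cone on $\lk(\tilde v)=\Gamma$. The long exact sequence of the pair then reads $0=H_2(\widetilde{X_{\Gamma}})\to H_2(\widetilde{X_{\Gamma}},\widetilde{X_{\Gamma}}\smallsetminus\{\tilde v\})\xrightarrow{\partial}H_1(\widetilde{X_{\Gamma}}\smallsetminus\{\tilde v\})$, so $\partial$ is injective. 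By naturality of the connecting map and of the excision isomorphism, $\partial$ sends the local class corresponding to an embedded cycle $C\subset\Gamma$ to the class of the inclusion of $C$ into $H_1(\widetilde{X_{\Gamma}}\smallsetminus\{\tilde v\})$; applied to the descending link this is exactly $[\tilde\gamma]$. By the earlier Proposition and \cref{prop:ascdesclinks}, the descending link is an embedded loop of length $18$ in the graph $\Gamma$, hence nonzero in $H_1(\Gamma)$. Injectivity of $\partial$ then forces $[\tilde\gamma]\neq 0$, completing the argument.

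The main obstacle is purely bookkeeping: one must verify that the homology class of $\tilde\gamma$ genuinely corresponds, under excision and the connecting homomorphism, to the class of the descending-link cycle in $H_1(\Gamma)$ rather than to some other element. This is precisely where naturality of the local-homology sequence is invoked, and it is the only place where care is needed. Passing to the universal cover is essential: $Z$ itself is only aspherical with $\pi_1(Z)=H$, so $H_2(Z)=H_2(H)$ need not vanish, and the analogous argument downstairs would leave open the possibility that $[\gamma_v]$ dies against a class coming from $H_2(Z)$.
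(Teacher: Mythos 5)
Your proof is correct, and it shares the paper's skeleton while replacing its key step with a genuinely different argument. Both proofs pass to the universal cover $\widetilde{Z}=\widetilde{X_\Gamma}$ and use that $\gamma_v$, being null-homotopic in $Z$, lifts to a loop $\tilde\gamma$ near a lift $\tilde v$ of $v$; your lifting-the-disc formulation is equivalent to the paper's appeal to injectivity of $\pi_1$ under the covering $\widetilde{Z}\smallsetminus\rho^{-1}(v)\to Z\smallsetminus\{v\}$. Where you diverge is the core non-triviality claim in the punctured cover: the paper argues geometrically, composing the radial retraction $\widetilde{Z}\smallsetminus\{\tilde v\}\to\lk(\tilde v)=\Gamma$ (available because $\widetilde Z$ is CAT(0), so geodesics to $\tilde v$ are unique) with a retraction of $\Gamma$ onto the descending-link cycle, so that a null-homotopy of $\tilde\gamma$ would retract to a null-homotopy of the identity on $S^1$; you instead run the local homology exact sequence, using contractibility of $\widetilde{Z}$ to get $H_2(\widetilde Z)=0$, excision to identify $H_2(\widetilde Z,\widetilde Z\smallsetminus\{\tilde v\})$ with $\tilde H_1(\Gamma)$, injectivity of $\partial$, and the fact that the embedded $18$-cycle of \cref{prop:ascdesclinks} is nonzero in $H_1(\Gamma)$. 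Your version buys some robustness: it needs no retraction of $\Gamma$ onto the cycle (only that the cycle is embedded), and it proves the stronger statement that $\tilde\gamma$ is not even null-homologous in the punctured cover, with Hurewicz closing the gap back to $\pi_1$; the paper's version is more elementary and stays entirely within $\pi_1$ and explicit retractions. Two small wording points: the \emph{closed star} of a vertex in a cube complex is not literally the cone on the link (whole squares versus quarter-squares) --- you should say a small metric ball around $\tilde v$, or the closed star in the barycentric subdivision, is the cone on $\lk(\tilde v)$; and your naturality bookkeeping (the square relating $\partial$ for $(\operatorname{cone}(\Gamma),\Gamma)$ to $\partial$ for $(\widetilde Z,\widetilde Z\smallsetminus\{\tilde v\})$ via the inclusion of pairs) is exactly right and is indeed the one place requiring care. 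Your closing remark about why one cannot argue downstairs in $Z$ --- that $H_2(Z)$ need not vanish --- correctly identifies why both proofs must pass to the universal cover.
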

\begin{proof}
	Let $\widetilde{Z}$ be the universal cover of $Z$ and $\rho$ be the associated covering map. 
	We obtain another covering $q\colon \widetilde{Z}\smallsetminus \rho^{-1}(v)\to Z\smallsetminus \{v\}$. 
	Note that $\widetilde{Z}$ is also the universal cover of $X_{\Gamma}$, so is a CAT(0) cube complex.
	Since $\gamma_v$ is a null-homotopic loop in $Z$, we can see that it lifts to a loop $l$ in $\widetilde{Z}$. 
	This lift lies in the $\frac{1}{4}$-neighbourhood of a lift $w$ of $v$. 

	There is a retraction $\widetilde{Z}\to \lk(w)$.
	There is a further retraction $\lk(w)\to l$. 
	Hence we can see that $l$ is not null-homotopic in $\widetilde{Z}\smallsetminus\{w\}$. 
	Since $q_*$ is an injection and $q_*(l) = \gamma_v$, we get that $\gamma_v$ is not null-homotopic in $Z\smallsetminus\{v\}$. 
\end{proof}

\section{Branched covers and uncountably many groups}

Throughout this section let $p$ be a prime number. Denote by $\Gamma(\lambda)$ the full subgraph of $\Gamma$ spanned by $\lambda$.

We will now take branched covers of $Z$ to obtain the uncountable family of groups. 
To do the branching, we mimic section 21 of \cite{leary}.
There, a branched cover is obtained for any regular cover of $L$. 
In our case, we can follow a similar procedure. However, we are unable to appeal to the embedding results used in \cite{leary}. 

We will focus on the vertices which map to $(a^+, b^+)$. 
At these vertices, the ascending link is given by $\Gamma(A^-\sqcup B^-)$ and the descending link is given by $\Gamma(A^+\sqcup B^+)$. 
Let $\Lambda$ be the subgraph of $\Gamma$ containing both of these graphs and one edge joining them. 
There is a retraction $\Gamma\to \Lambda$ and a further retraction $\Lambda\to S^1$ mapping both loops homeomorphically to $S^1$. 

We can now pull back the $p$-fold cover of $S^1$ to $\Lambda$. 
Let $\bar{\Gamma}$ be the corresponding cover of $\Gamma$ and $p$ be the covering map.
Thus $\bar{\Gamma}$ is a normal covering space with group of deck transformations $\Z/p\Z$. 
By construction, the preimages of $\Gamma(A^+\sqcup B^+)$ and $\Gamma(A^-\sqcup B^-)$ are connected graphs isomorphic to $S^1$. 
In fact, these are $p$-fold covers of $\Gamma(A^+\sqcup B^+)$ and $\Gamma(A^-\sqcup B^-)$, respectively. 

Recall that we have a Morse function $h\colon Z\to\R$. 
Let $V$ be the set of vertices in $Z$ that map to $(a^+, b^+)$ in $X$ and $h(v)>0$. 
The set $V$ is countably infinite and there is a natural bijection with $\N$. 
Note that we are aiming to construct groups which are not virtually torsion-free.
Hence guided by \cref{conj}, we consider only positive branching points with respect to the Morse function to eliminate any possibility of $\mathbb{Z}$-periodicity in our set $W$.
Thus the set $V$ considered here is only a half-ray of the more general case, where it would naturally biject with $\mathbb{Z}$.
As a consequence, this construction is also slightly simpler.

For each $W\subset V$, let $Z_W$ be the space obtained from $Z$ by removing all vertices $w\in W$ and adding a disk to each loop in $\lk(w)$ in the image of $\bar{\Gamma}$. 
Let $T_w$ be the complex obtained from $\lk(w)$ by attaching these disks. 
Let $H_W = \pi_1(Z_W)$. 

To each $W$ there is also an associated CAT(0) cube complex as follows. 
Let $\widetilde{Z_W}$ be the universal cover of $Z_W$.
Let $\mathcal{D}$ be the collection of disks added to $Z\smallsetminus W$ to obtain $Z_W$. 
Let $\widetilde{\mathcal{D}}$ be the preimages of the added disks in $\widetilde{Z_W}$. 
There is a covering map $\widetilde{Z_W}\smallsetminus\widetilde{\mathcal{D}}\to Z\smallsetminus W$. 
By lifting the metric and taking a completion, we obtain a CAT(0) cube complex $Y_W$. 
We can view the group $H_W$ as the group of deck transformations for the branched cover $Y_W\to Z$. 
Thus there is an action of $H_W$ on $Y_W$ which is free away from vertices. 

Let $B_W$ be the barycentric subdivision of $Y_W$. 
Let $C_W$ be the subgraph of $B_W$ spanned by the vertices which are not vertices of $Z_W$. 
The graph $C_W$ is quasi-isometric to $Z_W$. 
The action of $H_W$ on $Z_W$ preserves $C_W$ and acts freely on this subgraph. 
The following theorem now follows from Proposition 3.3 of \cite{coulonosin}.
\begin{lemma}\label{lem:uniformprop}
	The groups $H_W$ admit a uniformly proper action on a hyperbolic space. 
\end{lemma}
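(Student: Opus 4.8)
The plan is to produce a single uniformly locally finite hyperbolic graph on which $H_W$ acts freely, and then to exploit the elementary fact that a free action on such a graph is automatically uniformly proper; this is the content of Proposition~3.3 of~\cite{coulonosin}. The natural candidate is the graph $C_W\subset B_W$. As recorded in the construction, the branch points all lie at vertices of $Y_W$, the subgraph $C_W$ (spanned by the barycentres of positive-dimensional cells) avoids them, and the deck action of $H_W$ on $Y_W$ is free away from vertices; hence $H_W$ preserves $C_W$ and acts freely on it. It therefore remains only to check two things: that $C_W$ is uniformly locally finite, and that $C_W$ is a hyperbolic space.

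Uniform local finiteness is the routine part. The tower $Y_W\to Z\to X_{\Gamma}$ shows that each vertex link of $Y_W$ is a cover of one of the finitely many links occurring in $X_{\Gamma}$, possibly modified by the branching, so there are only finitely many local isomorphism types and the valence of $Y_W$ is bounded. Passing to the barycentric subdivision $B_W$ and then to the subgraph $C_W$ preserves a bound on valence, so a ball of fixed radius $r$ in $C_W$ contains at most some $M=M(r)$ vertices, uniformly in its centre. Granting this, uniform properness is immediate: fix $r$ and a vertex $x\in C_W$. Since the action is free, the map $g\mapsto gx$ is injective, and it sends $\{g\in H_W\mid d(x,gx)\le r\}$ into the ball of radius $r$ about $x$. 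Thus $|\{g\in H_W\mid d(x,gx)\le r\}|\le M(r)$, a bound independent of $x$, which is exactly the estimate required.

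The one point that needs genuine care, and which I expect to be the main obstacle, is hyperbolicity of $C_W$ in the \emph{absence} of a cocompact action: here $Y_W/H_W=Z$ is the infinite cyclic cover, so one cannot simply quote Moussong's theorem as stated above. Instead I would argue via a uniform link condition. The branched-cover construction keeps $Y_W$ a $2$-dimensional CAT(0) cube complex, and at a branch point the circular ascending/descending link is replaced by its $p$-fold cover, that is, by a longer cycle; so every vertex link still has girth at least $5$, with a lower bound that is uniform because only finitely many local types occur. A $2$-dimensional CAT(0) cube complex all of whose vertex links have girth $\ge 5$ satisfies a uniform combinatorial Gauss--Bonnet (negative curvature) estimate and is therefore Gromov hyperbolic; this is the local-to-global, non-cocompact form of the criterion underlying the theorem of Moussong quoted above. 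Since $C_W$ is quasi-isometric to $Y_W$ (it is a spanning subgraph of its barycentric subdivision), $C_W$ is hyperbolic as well. Combining this with the free, uniformly proper action constructed above yields the lemma.
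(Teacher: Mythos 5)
Your proof is correct and follows essentially the same route as the paper: the paper's proof consists exactly of the setup you use --- the graph $C_W$ inside the barycentric subdivision of $Y_W$, quasi-isometric to $Y_W$, on which $H_W$ acts freely --- followed by an appeal to Proposition~3.3 of \cite{coulonosin}. Your additional verifications (uniform local finiteness via the finitely many link types, the orbit-counting argument for uniform properness, and hyperbolicity of $Y_W$ from the girth~$\geq 5$ link condition, which survives branching since covers of graphs have girth at least that of the base) correctly fill in details that the paper leaves implicit in the construction and the citation.
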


To finish this section, we study a presentation for $H_W$. 
In the original complex $Z$ we had one relation for each vertex $v$. 
As shown in \cref{lem:necrels}, the words $r_v$ do not represent the trivial element of $\pi_1(Z\smallsetminus \{v\})$. 
Since in the descending link we are gluing a disk to $\gamma_v^p$ if $v\in W$, we can see that a set of relators for $H_w$ is $\{ r_v\mid v\notin W\}\cup \{r_v^p\mid v\in W\}$.

The following lemma shows that $r_v$ does not represent the trivial element of $H_W$ if $V\in W$. 
This will be useful for showing that these groups are not virtually torsion-free. 
\begin{lemma}\label{lem:tors}
	If $v\in W$, then the loop $\gamma_v$ is not null-homotopic in $Z_W$. 

	Also $r_v$ does not represent the trivial word in $H_W$. 
\end{lemma}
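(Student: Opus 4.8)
The plan is to show that the class $[\gamma_v]\in H_W=\pi_1(Z_W)$ is nontrivial by realising it as a nontrivial deck transformation of the \emph{unbranched} part of the covering $Y_W\to Z$, in the spirit of \cref{lem:necrels}. First I would record that $\gamma_v$ lies in $Z\smallsetminus W$: it sits in the $\tfrac14$-neighbourhood of $v$, so it meets no vertex of $Z$ at all. Restricting the branched cover $Y_W\to Z$ to the complement of the branch locus yields the covering $\widetilde{Z_W}\smallsetminus\widetilde{\mathcal D}\to Z\smallsetminus W$, which by construction is regular with deck group $H_W$. The associated monodromy $\pi_1(Z\smallsetminus W)\to H_W$ factors through the inclusion-induced map $\pi_1(Z\smallsetminus W)\to\pi_1(Z_W)$, since a lift of a loop into $\widetilde{Z_W}\smallsetminus\widetilde{\mathcal D}$ is just the restriction of its lift into $\widetilde{Z_W}$. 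Hence the image of $\gamma_v$ under the monodromy is exactly its class in $H_W$, and it suffices to prove that a lift of $\gamma_v$ through this covering fails to close up.

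The heart of the argument is a local computation at a chosen preimage $\tilde v$ of $v$. On links the branched covering is modelled by $\bar{\Gamma}\to\Gamma$, and the descending circle $l=\Gamma(A^+\sqcup B^+)$ is covered by the \emph{connected} circle $\bar l$ of length $18p$ via the degree-$p$ map, as established in the construction of $\bar{\Gamma}$. Since $\gamma_v$ runs once around $l$, its lift runs only $1/p$ of the way around $\bar l$: it is an arc whose two endpoints are distinct, differing by the order-$p$ deck transformation of $\bar l\to l$. Therefore the lift of $\gamma_v$ is not a loop, the monodromy of $\gamma_v$ is nontrivial, and so $[\gamma_v]\neq 1$ in $H_W$; in particular $\gamma_v$ is not null-homotopic in $Z_W$. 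As a consistency check, $\gamma_v^p$ runs once around $\bar l$ and bounds the disk added at $\tilde v$, which matches the fact that $r_v^p$ is a relator of $H_W$.

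It remains to transfer this to $r_v$. The word $r_v$ was obtained from $\gamma_v$ by the standard Morse homotopy sliding the loop down into $Z_0$; this homotopy takes place in $Z\smallsetminus\{v\}\subseteq Z_W$, so $r_v$ and $\gamma_v$ represent the same element of $H_W$. The previous paragraph then gives $[r_v]=[\gamma_v]\neq 1$, so $r_v$ does not represent the trivial word in $H_W$.

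The main obstacle I expect is making the local degree-$p$ picture fully rigorous: one must check that over a punctured neighbourhood of $v$ the covering $\widetilde{Z_W}\smallsetminus\widetilde{\mathcal D}\to Z\smallsetminus W$ is connected of degree $p$, so that $\gamma_v$ genuinely lifts to an arc rather than splitting into $p$ loops, and that this local monodromy is the actual class of $\gamma_v$ under the identification of $H_W$ with the deck group. Both points rest on the connectedness of $\bar l\to l$, i.e.\ on $\bar{\Gamma}$ being a connected $p$-fold cover on each descending circle, which is where the construction of the branched cover is used in an essential way.
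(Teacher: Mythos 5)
Your reduction to the monodromy of the covering $\widetilde{Z_W}\smallsetminus\widetilde{\mathcal{D}}\to Z\smallsetminus W$ is sound, but the heart of your argument --- the claim that over a punctured neighbourhood of a lift of $v$ this covering is modelled on $\bar{\Gamma}\to\Gamma$, so that the lift of $\gamma_v$ is an arc traversing $1/p$ of $\bar l$ --- is equivalent to the statement being proved, and your sketch supplies no independent reason for it. Concretely: the component of the preimage of the punctured neighbourhood is the cover of $\lk(v)\cong\Gamma$ corresponding to $K=\ker\bigl(\pi_1(\Gamma)\to H_W\bigr)$. The attached disks force $N\leq K$, where $N$ is the image of $\pi_1(\bar{\Gamma})$, and since $[\pi_1(\Gamma):N]$ is the prime $p$, either $K=N$ (your picture) or $K=\pi_1(\Gamma)$, in which case the covering is trivial near $v$, the lift of $\gamma_v$ closes up, and $r_v=1$ in $H_W$. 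Ruling out the second case is exactly the content of the lemma: $H_W$ is given by generators and relators, and a priori the relators coming from the \emph{other} vertices could conspire to kill $\gamma_v$. Your closing paragraph flags this obstacle but asserts that it rests on the connectedness of $\bar l\to l$; that is not enough. Connectedness of $\bar l$ only shows that $\pi_1(T_v)\cong\Z/p\Z$ is generated by $\gamma_v$, i.e.\ that \emph{if} the local covering is nontrivial then the lift fails to close up; it cannot exclude the degenerate case $K=\pi_1(\Gamma)$. (A minor separate slip: $Z\smallsetminus\{v\}$ is not contained in $Z_W$, since the other vertices of $W$ are removed as well; the homotopy from $\gamma_v$ to $r_v$ must avoid all of $W$, a point the paper also treats as part of the setup.)

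The missing ingredient is global, and it is where the paper's proof does its real work: a retraction argument using the CAT(0) geometry of $Y_W$. As in \cref{lem:necrels}, there is a retraction of $Y_W\smallsetminus\{\tilde v\}$ onto $\lk(\tilde v, Y_W)$; because the boundaries of the added disks are null-homotopic in the contractible space $Y_W$, this extends to a retraction $\widetilde{Z_W}\to L_v$, where $L_v$ is $\lk(\tilde v, Y_W)$ with the disks based at $\tilde v$ attached. A retract of a simply connected space is simply connected, so $\pi_1(L_v)=1$; since $L_v$ covers $T_v$ and $\pi_1(T_v)\cong\Z/p\Z$ is generated by $\gamma_v$, the loop $\gamma_v$ cannot lift to a loop in $L_v$, hence not to $\widetilde{Z_W}$, which forces $K=N$ after all. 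So your local degree-$p$ picture is correct, but it is a \emph{consequence} of this retraction argument rather than of the construction of $\bar{\Gamma}$ alone; to complete your proof you would need to insert such a global argument (or some substitute, e.g.\ exhibiting a quotient of $H_W$ in which $r_v$ visibly has order $p$) at precisely the point where you assert that the lift of $\gamma_v$ is an arc rather than a loop.
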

\begin{proof}
	As before, let $\widetilde{Z_W}$ be the universal cover of $Z_W$. 
	Let $Y_W$ be the cube complex obtained by removing $\widetilde{\mathcal{D}}$ from $\widetilde{Z_W}$ and taking a completion. 
	Suppose that $\gamma_v$ is trivial in $Z_W$. 
	Then $\gamma_v$ lifts to $\widetilde{Z_W}$ in the neighbourhood of a vertex $w$. 
	There is a retraction $Y_W\smallsetminus\{v\}\to \lk(v, Y_W)$. 
	
	Let $D$ be a disk in $\widetilde{\mathcal{D}}$. 
	Since $Y_W$ is contractible, we can see that $\partial D$ is a null-homotopic loop in $Y_W$.
	Thus under the retraction, the boundary of each disk $D$ is sent to a trivial loop in $\lk(v, Y_W)$. 

	We can now obtain a retraction $\widetilde{Z_W}\to L_v$ where $L_v$ is the space obtained from $\lk(v, Y_W)$ by attaching those disks in $\widetilde{\mathcal{D}}$ which are based at $v$. 
	Since this is a retraction, it gives a surjection on fundamental groups. 
	Thus we can see that $L_v$ is simply connected. 

	We also know that $L_v$ is a cover of $T_v$, with $\pi_1(T_v)$ being $\Z/p\Z$, generated by $\gamma_v$. 
	Thus we can see that $\gamma_v$ does not lift to $L_v$ and hence does not lift to $\widetilde{Z_W}$. 
	Thus $\gamma_v$ is not null-homotopic in $Z_W$.
	Since $\gamma_v$ represents the word $r_v$ in $H_W$, we see that $r_v$ does not represent the identity in $H_W$. 
\end{proof}

Using the invariant defined in \cite{kropholler}, we can see that there are uncountably many isomorphism classes of groups $H_W$. 
Recall the following definition. 
\begin{definition}
	Let $R\subset F(A)$ be a set of words in the free group on a finite set $A$. 
	Let $G$ be the group generated by $A$. 
	Define $\mathcal{R}(G, A, R) = \{w\in R\mid r(A) = 1$ in $G\}$. 
\end{definition}

In \cite{kropholler}, it is shown that for a fixed set $R$ and fixed group $G$, there are only countably many possibilities for $\mathcal{R}(G, A, R)$. 
Let $A$ be the generating set for $H_W$ coming from $\pi_1(Z_0)$. 
Let $R = \{r_v\mid v\in Z\}$. 
From \cref{lem:tors}, we can see that $\mathcal{R}(H_W, A, R) = \{r_v\mid v\notin W\}$. 
Thus we obtain the following, 

\begin{proposition}
	The groups $H_W$ form an uncountable family of groups. 
\end{proposition}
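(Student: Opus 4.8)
The plan is to deploy the invariant $\mathcal{R}(G, A, R)$ to separate the groups $H_W$ into uncountably many isomorphism classes. The decisive computation is already available: taking $A$ to be the finite generating set inherited from $\pi_1(Z_0)$ and $R = \{r_v \mid v \in V\}$, \cref{lem:tors} together with the explicit presentation of $H_W$ shows that $r_v = 1$ in $H_W$ precisely when $v \notin W$, so that
\[
\mathcal{R}(H_W, A, R) = \{r_v \mid v \notin W\}.
\]
First I would record that $v \mapsto r_v$ is injective on $V$—each $r_v$ is obtained by homotoping to $Z_0$ the loop $\gamma_v$ around the distinct vertex $v$, and distinct vertices yield distinct words—so that $W \mapsto \mathcal{R}(H_W, A, R)$ is injective. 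Since $V$ is countably infinite, letting $W$ range over all of its subsets then produces $2^{\aleph_0}$ pairwise distinct values of the invariant.

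The second ingredient is the countability theorem of \cite{kropholler}: for a fixed set $R$ and a fixed group $G$, only countably many values of $\mathcal{R}(G, A, R)$ occur as the marking $A$ varies. Here the reason is transparent: each $H_W$ is finitely generated, hence countable, and $A$ is a finite alphabet, so there are only countably many homomorphisms $F(A) \to H_W$, and each such marking determines a single value of the invariant. The resulting countable set of achievable values depends only on the isomorphism type of the group, because an isomorphism simply transports one marking to another while preserving which words of $R$ become trivial.

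To conclude, I would argue by contradiction. Suppose the family $\{H_W \mid W \subseteq V\}$ met only countably many isomorphism classes, and fix a representative of each. By the previous paragraph every representative contributes only countably many values of $\mathcal{R}(\,\cdot\,, A, R)$, so all the values $\mathcal{R}(H_W, A, R)$ would lie in a countable union of countable sets, hence in a countable set. This contradicts the uncountable supply of distinct values from the first step, and therefore the groups $H_W$ must realise uncountably many isomorphism classes.

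I expect the one genuine subtlety to be the interplay between the marking-dependence of $\mathcal{R}$ and its use as an isomorphism invariant: a priori $\mathcal{R}(G, A, R)$ depends on the identification $A \to G$ and not on the abstract group alone. The countability statement of \cite{kropholler} is exactly what tames this, confining the value attached to the natural marking of $H_W$ to a countable set determined by the isomorphism class $[H_W]$; once this is in place, the remainder is a routine cardinality count.
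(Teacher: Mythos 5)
Your proposal is correct and takes essentially the same route as the paper: the same invariant $\mathcal{R}(H_W, A, R)$ with the same marking $A$ from $\pi_1(Z_0)$, the same computation $\mathcal{R}(H_W, A, R) = \{r_v \mid v \notin W\}$ via \cref{lem:tors}, and the same appeal to the countability theorem of \cite{kropholler}, with you merely spelling out the cardinality bookkeeping the paper leaves implicit. One small remark: your claim that distinct vertices yield distinct words $r_v$ is not obviously justified as stated, but it is also not needed --- \cref{lem:tors} already forces $r_v \neq r_{v'}$ as words whenever $v \in W$ and $v' \notin W$ (else $r_v$ would be trivial in $H_W$), and this weaker fact suffices for the injectivity of $W \mapsto \mathcal{R}(H_W, A, R)$.
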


This allows us to obtain the first part of \ref{mainthm}. 
Namely, 
\begin{theorem}
	There are groups of the form $H_W$ which cannot be subgroups of hyperbolic groups.  

	As such, there are finitely generated groups acting uniformly properly on hyperbolic spaces which cannot be subgroups of hyperbolic groups. 
\end{theorem}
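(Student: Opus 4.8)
The plan is to deduce the result from a cardinality comparison, pitting the uncountable family $\{H_W\}$ against the countably many isomorphism classes of finitely generated subgroups of hyperbolic groups. First I would collect the two facts already in hand: by the preceding proposition the groups $H_W$ realise uncountably many isomorphism classes, and by \cref{lem:uniformprop} every $H_W$ is finitely generated and admits a uniformly proper action on a hyperbolic space.

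The heart of the argument is to show that there are only countably many isomorphism classes of finitely generated groups that embed into some hyperbolic group. A hyperbolic group is finitely presented, and there are only countably many finite presentations over a fixed countable alphabet; hence there are only countably many hyperbolic groups up to isomorphism. Each such group is itself countable, so the finite tuples of its elements form a countable set, and therefore each hyperbolic group contains only countably many finitely generated subgroups. Taking the union over the countable collection of all hyperbolic groups yields only countably many finitely generated subgroups of hyperbolic groups, and a fortiori only countably many isomorphism classes among them.

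The conclusion of the first assertion is then immediate by pigeonhole: since $\{H_W\}$ meets uncountably many isomorphism classes while only countably many of these classes can be represented by a subgroup of a hyperbolic group, uncountably many of the $H_W$ — in particular at least one — cannot embed in any hyperbolic group. Combining this with \cref{lem:uniformprop} gives the second assertion, namely that each such $H_W$ is a finitely generated group acting uniformly properly on a hyperbolic space which is not a subgroup of a hyperbolic group.

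I expect the only genuinely delicate point to be the counting step in the second paragraph; everything else is formal once the uncountability of the family is granted. A minor subtlety there is to perform the count on isomorphism classes rather than on literal subgroups, but this is harmless, since the image of a countable set under passage to isomorphism classes remains countable.
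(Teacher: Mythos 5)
Your proposal is correct and follows essentially the same route as the paper: both count that hyperbolic groups, being finitely presented, form a countable class, that each contains only countably many finitely generated subgroups, and then conclude by pigeonhole against the uncountability of the family $\{H_W\}$ together with \cref{lem:uniformprop}. Your extra remarks (generating tuples in a countable group, and counting isomorphism classes rather than literal subgroups) merely make explicit steps the paper leaves implicit.
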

\begin{proof}
	Since hyperbolic groups are finitely presented, there are only countably many of them. 
	In a given hyperbolic group $G$ there are only countably many finitely generated subgroups. 
	Thus there are only countably many finitely generated subgroups of hyperbolic groups. 
	Since there are uncountably many groups of the form $H_W$, we obtain examples of finitely generated groups which act uniformly properly on hyperbolic spaces which are not subgroups of hyperbolic groups. 
\end{proof}

\section{Virtually torsion-free criterion}

We will now prove that only countably many of the groups $H_W$ are virtually torsion-free. Denote by $o(g)$ the order of a group element.

Let $r_v$ be the sequence of words coming from $\gamma_v$ as $v$ runs over the vertices of $Z$. 
We know that there is a generating set $A$ such that $H_W$ has the presentation $\langle A\mid R\rangle$, where $R = \{r_v\mid v\notin W\}\cup \{r_v^p \mid v\in W\}$. 
By \cref{lem:tors} we know that $r_v$ is not trivial if $v\in W$. 
Thus it has order $p$ in this case. 

Let $G$ be a group and $\phi\colon H_W\to G$ be a homomorphism. 
If $v\notin W$, then $o(\phi(r_v)) = 1$. 
If $v\in W$, then $o(\phi(r_v))\ |\ p$ and so is either 1 or $p$. 
Let $O$ be the subset of $V$ consisting of those $r_v$ such that $o(\phi(r_v)) = p$. 

Now suppose that $H_W$ is virtually torsion-free. 
Thus $H_W$ contains a finite index subgroup $F_W$ which is torsion-free. 
By considering the action of $H_W$ on the cosets of $F_W$, we obtain a homomorphism $\phi\colon H_W\to S_n$, where $n = |H_W:F_W|$.

Since $F_W$ is torsion-free, we can see that if $v\in W$, then $r_v\notin F_W$.
Hence, we obtain $o(\psi(r_v)) = p$ for all $v\in W$. 
Thus $O = W$ in this case. 

The homomorphism $\phi$ is determined by a map $A\to S_n$. 
There are only finitely many such maps for a fixed $n$. 
And thus only countably many such maps as $n$ varies. 

The set $O$ is determined by the map $A\to S_n$. 
Thus there can only be countably many sets $O$ picked out by this process.
There are uncountably many groups $H_W$, with one for each $W\subset V$. 
Thus only countably many of them can be torsion-free. 

This completes the proof of \cref{mainthm}.

\bibliography{bib}{}
\bibliographystyle{plain}

\end{document}